\newtheorem{theorem}{Theorem}[section]
\newtheorem{lemma}{Lemma}[section]
\newtheorem{claim}{Claim}
\title{The matching number of tree and bipartite degree sequences}
\author{F. Bock\and D. Rautenbach}
\date{}
\begin{document}
\onehalfspace

\maketitle

\begin{center}
{\small 
Institute of Optimization and Operations Research, Ulm University, Ulm, Germany\\
\texttt{$\{$felix.bock,dieter.rautenbach$\}$@uni-ulm.de}\\[3mm]
}
\end{center}

\begin{abstract}
We study the possible values of the matching number 
among all trees with a given degree sequence
as well as 
all bipartite graphs with a given bipartite degree sequence.
For tree degree sequences, we obtain closed formulas for the possible values.
For bipartite degree sequences,
we show the existence of realizations with a restricted structure,
which allows to derive an analogue of the Gale-Ryser Theorem
characterizing bipartite degree sequences.
More precisely, we show that a bipartite degree sequence
has a realization with a certain matching number 
if and only if 
a cubic number of inequalities 
similar to those in the Gale-Ryser Theorem
are satisfied.
For tree degree sequences as well as for bipartite degree sequences,
the possible values of the matching number form intervals.
\end{abstract}
{\small 
\begin{tabular}{lp{13cm}}
{\bf Keywords}: Matching; matching number; degree sequence; tree; bipartite graph
\end{tabular}
}

\pagebreak

\section{Introduction}

In the present paper we study the possible values of the matching number among all trees with a given degree sequence
as well as 
all bipartite graphs with a given bipartite degree sequence.

We consider finite, simple, and undirected graphs, and use standard terminology.
For a graph $G$, let $\nu(G)$, $\tau(G)$, and $\alpha(G)$ be the matching number, the vertex cover number, and the independence number of $G$, respectively. By the classical results of Gallai \cite{ga2}, K\H{o}nig \cite{ko}, and Egerv\'ary \cite{eg}, 
the sum $\tau(G)+\alpha(G)$ equals the order $n(G)$ of $G$ for every graph $G$,
and $\nu(G)$ equals $\tau(G)$ for every bipartite graph $G$,
in particular, for every tree.
The {\it degree sequence} of a graph $G$ is the nonincreasing sequence $d(G)$ 
of the degrees of the vertices of $G$,
that is, if $G$ has vertices $v_1,\ldots,v_n$ and $d_G(v_1)\geq\ldots\geq d_G(v_n)$, 
then $d(G)=(d_G(v_1),\ldots,d_G(v_n))$.
Every graph is a {\it realization} of its degree sequence.
For a finite sequence $d$ of integers and an integer $i$, 
let $n(d)$ be the number of elements of $d$, and
let $n_i(d)$ be the number of elements of $d$ that equal $i$.
A {\it tree degree sequence} is a degree sequence of some tree.
For an integer $k$, let $[k]$ be the set of positive integers at most $k$.

For a degree sequence $d$, 
a class ${\cal C}$ of graphs, 
a graph invariant $\eta$, and an optimization goal
${\rm opt}\in \{ \min, \max\}$, let 
$$\eta^{\cal C}_{\rm opt}(d)={\rm opt}\big\{ \eta(G):G\in {\cal C}\mbox{ and }d(G)=d\big\},$$
that is, 
$\eta^{\cal C}_{\rm opt}(d)$ captures an extremal value of the invariant $\eta$ among all realizations $G$ of $d$ that belong to ${\cal C}$.
Let ${\cal G}$ and ${\cal T}$ be the classes of all graphs and trees, respectively.

Rao \cite{ra} showed that $\alpha^{\cal G}_{\max}(d)$ 
can be determined efficiently for every given $d$ (cf. also \cite{ra2,kele,yi}).
Similarly, Gentner et al. determined $\alpha^{\cal T}_{\min}(d)$ \cite{gehera1} and $\alpha^{\cal T}_{\max}(d)$ \cite{gehera2} 
for every given tree degree sequence $d$.
By the results of Gallai, K\H{o}nig, and Egerv\'ary,
$$\nu^{\cal T}_{\max}(d)=\tau^{\cal T}_{\max}(d)=n(d)-\alpha^{\cal T}_{\min}(d)
\,\,\,\,\,\,\,\,\mbox{ and }\,\,\,\,\,\,\,\,
\nu^{\cal T}_{\min}(d)=\tau^{\cal T}_{\min}(d)=n(d)-\alpha^{\cal T}_{\max}(d).$$
As our first contribution in Section \ref{sectrees},
we strengthen these results from \cite{gehera1,gehera2} giving simpler proofs.
We not only recover the extremal values 
but also show that all intermediate integers are independence numbers/matching numbers of tree realizations.

In Section \ref{secbip}, we consider bipartite graphs.
We prove a version of the well known Gale-Ryser theorem \cite{ga,ry}
that allows to determine the possible values of the matching number efficiently.
For a bipartite graph $G$ with fixed partite sets $A$ and $B$,
the {\it bipartite degree sequence} $d_{\rm bip}(G)$
is the pair $(d_A,d_B)$,
where 
$d_A$ is the nonincreasing sequence of the degrees of the vertices in $A$, and
$d_B$ is the nonincreasing sequence of the degrees of the vertices in $B$,
that is,
if $A=\{ v_1,\ldots,v_n\}$,
$d_G(v_1)\geq \ldots \geq d_G(v_n)$,
$B=\{ w_1,\ldots,w_m\}$, and 
$d_G(w_1)\geq \ldots \geq d_G(w_m)$,
then 
$$d_{\rm bip}(G)=((d_G(v_1),\ldots,d_G(v_n)),(d_G(w_1),\ldots,d_G(w_m))).$$
Every bipartite graph is a {\it realization} of its bipartite degree sequence.
Note that fixing the degrees in the two partite sets
easily allows to construct pairs of sequences 
$((a_1,\ldots,a_n),(b_1,\ldots,b_m))$
that are no bipartite degree sequences
even though the nonincreasing reorderings of $(a_1,\ldots,a_n,b_1,\ldots,b_m)$
have realizations that are bipartite.

The Gale-Ryser Theorem \cite{ga,ry} states that 
$((a_1,\ldots,a_n),(b_1,\ldots,b_m))$,
where all $a_i$ and $b_j$ are nonnegative integers,
is a bipartite degree sequence if and only if 
\begin{eqnarray}\label{e0}
\sum\limits_{i=1}^na_i=\sum\limits_{j=1}^mb_i
\,\,\,\,\,\,\,\,\,\mbox{ and }\,\,\,\,\,\,\,\,\,
\sum\limits_{i=1}^ka_i\leq \sum\limits_{j=1}^m\min\{ b_j,k\}
\mbox{ for every $k$ in $[n]$},
\end{eqnarray}
that is, the Gale-Ryser Theorem reduces the existence 
of a realization of a bipartite degree sequence 
to a linearly many inequalities.

For a bipartite degree sequence $(d_A,d_B)$
and an integer $\nu$ 
that is the matching number of some realization of $(d_A,d_B)$,
we establish the existence of a realization 
with a well specified maximum matching
and minimum vertex cover.
This allows to apply network flows to the considered problems,
and to reduce the existence of realizations of $(d_A,d_B)$ 
with a given matching number
to a cubic number of inequalities similar to (\ref{e0}).

\section{Trees}\label{sectrees}

It is well known that a sequence $(d_1,\ldots,d_n)$ of at least $2$ nonnegative integers is a tree degree sequence if and only if 
\begin{eqnarray}\label{e1}
n_1(d)&=&2+\sum\limits_{i:d_i\geq 2}(d_i-2).
\end{eqnarray}

\begin{lemma}\label{lemma1}
If $d$ is a tree degree sequence with $n(d)\geq 3$, then 
$$\nu^{\cal T}_{\max}(d)=\min\left\{ \left\lfloor \frac{n(d)}{2}\right\rfloor,n(d)-n_1(d)\right\}.$$
\end{lemma}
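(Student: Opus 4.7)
The plan is to handle the upper and lower bound in
$\nu^{\cal T}_{\max}(d)=\min\{\lfloor n(d)/2\rfloor,n(d)-n_1(d)\}$
separately, constructing a realization for the lower bound by contracting a desired maximum matching to an auxiliary tree.

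For the upper bound, set $n=n(d)$ and $n_1=n_1(d)$. The bound $\nu(T)\le\lfloor n/2\rfloor$ is the trivial matching bound. For $\nu(T)\le n-n_1$ I observe that in a tree with $n\ge 3$ no two leaves can be adjacent (they would form a component of order $2$), so every matching edge is incident to at least one non-leaf, and since no non-leaf lies in two matching edges, $\nu(T)\le n-n_1$.

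For the matching lower bound I build a realization $T$ with $\nu(T)\ge\nu^*:=\min\{\lfloor n/2\rfloor,n-n_1\}$. Put $k=n-n_1$, list the non-leaves as $v_1,\ldots,v_k$ (all of degree $\ge 2$) and the leaves as $\ell_1,\ldots,\ell_{n_1}$, and specify a set $M$ of $\nu^*$ vertex-disjoint pairs as follows. If $n_1\ge k$ then $\nu^*=k$ and I pair $v_i$ with $\ell_i$ for $i\in[k]$, leaving the surplus leaves unmatched. If $n_1<k$ then $\nu^*=\lfloor n/2\rfloor$ and I pair $v_i$ with $\ell_i$ for $i\in[n_1]$, then form the further pairs $\{v_{n_1+1},v_{n_1+2}\},\{v_{n_1+3},v_{n_1+4}\},\ldots$ (leaving $v_k$ unmatched iff $n$ is odd). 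In either case no pair consists of two leaves, and $|M|=\nu^*$.

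I now contract each pair and each unmatched vertex of $M$ to a super-vertex, producing $N=n-\nu^*$ super-vertices. Assign to a pair $\{u,v\}$ the weight $d_u+d_v-2$ and to a singleton $\{u\}$ the weight $d_u$. Each weight is a positive integer, because every pair contains a non-leaf of degree at least $2$ and every singleton has degree at least $1$; and the weights sum to $\sum_v d_v-2\nu^*=2(n-1)-2\nu^*=2(N-1)$. Since $N\ge 2$ and any positive integer sequence summing to $2(N-1)$ automatically satisfies (\ref{e1}), this weight sequence is a tree degree sequence; let $T^*$ be a tree on the super-vertices realizing it. I expand back to a graph $T$ on the original vertex set by adding the matching edge $uv$ for every pair $\{u,v\}\in M$ and, for every edge of $T^*$ incident to a super-vertex $S$, deciding which vertex of $S$ receives its endpoint, splitting the $T^*$-incidences at a pair $\{u,v\}$ as $d_u-1$ to $u$ and $d_v-1$ to $v$. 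A direct count shows $T$ has $n$ vertices, $\nu^*+(N-1)=n-1$ edges, is connected (because $T^*$ is and each pair is internally connected by its matching edge), and realizes $d$. Hence it is a tree, $M$ is a matching in it of size $\nu^*$, and combined with the upper bound this gives $\nu(T)=\nu^*$.

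The one step that deserves care is the realizability of the super-weight sequence as a tree, which is precisely why $M$ is chosen so that no pair consists of two leaves: a leaf--leaf pair would produce a weight $0$ and so an isolated super-vertex. Once positivity of the weights is secured, (\ref{e1}) is automatic from the sum, and the remainder of the argument is direct combinatorial bookkeeping.
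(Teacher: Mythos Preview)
Your proof is correct, and it takes a genuinely different route from the paper's. The paper argues the lower bound by induction on $n_2(d)$: in the base case $n_2(d)=0$ it hangs leaves off a path of the non-leaves to get a matching of size $n-n_1$, and in the inductive step it removes a degree-$2$ entry, applies induction, and reinserts the vertex by subdividing a suitably chosen edge. Your argument is instead a one-shot construction: you fix the intended matching $M$ first (arranged so no pair is leaf--leaf), contract $M$ to a positive weight sequence on $N=n-\nu^*$ super-vertices summing to $2(N-1)$, realize that as an auxiliary tree $T^*$ via (\ref{e1}), and then blow $T^*$ back up, splitting incidences $d_u-1$/$d_v-1$ at each pair. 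The contraction idea buys you a uniform, induction-free construction that makes the role of the quantity $\min\{\lfloor n/2\rfloor,\,n-n_1\}$ transparent (it is exactly the largest matching one can lay down without being forced into a leaf--leaf pair), whereas the paper's induction is shorter to write and leans on the natural operation of edge subdivision. Both proofs produce explicit extremal trees.
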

\begin{proof}
Let $n=n(d)$ and $n_1=n_1(d)$.

Clearly, every matching in a graph of order $n$ contains at most $n/2$ edges,
which implies $\nu^{\cal T}_{\max}(d)\leq \lfloor n/2\rfloor$.
Furthermore, every edge of a tree of order $n$ at least $3$ is incident with a vertex of degree at least $2$,
which implies $\nu^{\cal T}_{\max}(d)\leq n-n_1$.
Altogether, we obtain $\nu^{\cal T}_{\max}(d)\leq \nu_{\max}:=\min\{ \lfloor n/2\rfloor,n-n_1\}$.
In order to complete the proof, 
we prove, by induction on $n_2(d)$,
that there is a tree $T$ with $d(T)=d$ and $\nu(T)\geq \nu_{\max}$.

First, let $n_2(d)=0$.
Let the tree $T$ of order $n$ arise by attaching leaves to a path $P:v_1\ldots v_{n-n_1}$ in such a way that $d_T(v_i)=d_i$ for every $i$ in $[n-n_1]$.
Clearly, $d(T)=d$. 
Furthermore, since every vertex $v_i$ of $P$ is adjacent to a leaf $w_i$ outside of $P$, 
the set $M=\{ v_iw_i:i\in [n-n_1]\}$ is matching in $T$.
Since $|M|=\nu_{\max}\geq \nu(T)$, 
it follows that $\nu(T)=|M|=\nu_{\max}$.

Now, let $n_2(d)>0$.
By (\ref{e1}), the sequence $d'$ that arises from $d$ by removing one element of $d$ that equals $2$, 
is a tree degree sequence.
By induction, there is a tree $T'$ with $d(T')=d'$ and 
$$\nu(T')\geq
\min\left\{ \left\lfloor \frac{n(d')}{2}\right\rfloor,n(d')-n_1(d')\right\}=
\min\left\{ \left\lfloor \frac{n-1}{2}\right\rfloor,(n-1)-n_1\right\}.$$
If $\nu(T')=(n-1)/2$, 
then $n$ is odd, and subdividing some edge of $T'$ yields a tree $T$ with $d(T)=d$, 
and $\nu(T)\geq \nu(T')=(n-1)/2=\lfloor n/2\rfloor\geq \nu_{\max}$.
Hence, we may assume that $\nu(T')<(n-1)/2$, 
which implies the existence of a maximum matching $M'$ in $T'$,
and a vertex $u$ of $T'$ that is not incident with an edge in $M'$.
Let $w$ be a neighbor of $u$ in $T'$, and 
let $T$ arise from $T'$ by subdividing the edge $uw$ with a new vertex $v$.
Clearly, $d(T)=d$, and $M=M'\cup \{ uv\}$ is a matching in $T$.
Now, $\nu(T')\geq |M|\geq\min\{ \lfloor (n-1)/2\rfloor,(n-1)-n_1\}+1\geq \nu_{\max}$,
which completes the proof.
\end{proof}

\begin{lemma}\label{lemma2}
If $(d_1,\ldots,d_n)$ is a tree degree sequence with $n\geq 3$, 
$V=\{ v_1,\ldots,v_n\}$, and $X\subseteq V$ are such that 
\begin{enumerate}[(i)]
\item $d_i>1$ for every $v_i$ in $X$,
\item $|X|\leq n/2$, and 
\item $\sum\limits_{i:v_i\in X}d_i\geq \sum\limits_{i:v_i\in V\setminus X}d_i$,
\end{enumerate}
then there is a tree $T$ with $V(T)=V$ such that 
$d_T(v_i)=d_i$ for every $v_i$ in $V$, and 
$X$ is a minimum vertex cover in $T$.
\end{lemma}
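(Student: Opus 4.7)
My plan is to prove Lemma~\ref{lemma2} by induction on $n$, distinguishing three sub-cases in the inductive step. The base case $n = 3$ is immediate, since the unique tree degree sequence is $(2, 1, 1)$ and condition (iii) forces $X = \{v_1\}$, which is the center of the path and hence a minimum vertex cover. Before the case split, note the preliminary arithmetic observation that $S_X \geq 2k$ combined with $S_X + S_Y = 2(n-1)$ forces $S_Y \leq 2|Y| - 2$, and hence $Y$ always contains at least two leaves.

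In Case~A ($k \leq (n-1)/2$ and some $x \in X$ has $d(x) \geq 3$), I will delete a leaf $y \in Y$ and decrease $d(x)$ by $1$, keeping $X' = X$. Equation~(\ref{e1}) shows that the reduced $d'$ is a tree sequence, and conditions (i)--(iii) carry over to $(d', X')$ with $n' = n - 1$; the inductive hypothesis then yields a tree $T'$ with $X$ a minimum vertex cover. Attaching $y$ to $x$ preserves $X$ as a vertex cover of $T$, and any maximum matching of $T'$ remains a matching in $T$, so K\H{o}nig's theorem gives that $X$ is still minimum in $T$.

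Case~B ($k = n/2$, $n$ even) is the main obstacle. The naive single-step reduction would create a new leaf in $T'$ whose matching status must be controlled, and this seems to require a strengthened inductive hypothesis. Instead, the plan is a \emph{double reduction} that suppresses an entire pendant path $y - x - z$ in one step: choose a leaf $y \in Y$, a degree-$2$ vertex $x \in X$ (both exist by arithmetic checks using $S_X \geq 2k$ and $\ell_Y \geq 2$), and a third vertex $z$ chosen either as $z \in Y$ with $d(z) \geq 2$ if such exists, or otherwise as $z \in X$ with $d(z) \geq 3$ (which exists for $n \geq 6$); then remove $x, y$ from the sequence and decrease $d(z)$ by $1$. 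The verification of (iii) uses $S_X - S_Y \geq 2$ in Case~B. The reduced instance $(d'', X'')$ again lies in Case~B with $n'' = n - 2$, so the recursion bottoms out at $n = 4$, where $T$ is the $4$-vertex path with $X$ its two internal vertices. Reinserting $x$ as a degree-$2$ vertex between $z$ and a new leaf $y$ produces the desired $T$, and any maximum matching of $T''$ together with $xy$ saturates $X$.

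In Case~C ($k \leq (n-1)/2$ with all $x \in X$ of degree $2$), arithmetic forces $k = (n-1)/2$, $n$ odd, and $S_X = S_Y = n - 1$. I will construct $T$ directly: the sequence $(d(y_j))_{y_j \in Y}$ is a tree sequence on $|Y| = (n+1)/2$ vertices (verified via (\ref{e1})), so there is a tree $T_Y$ on $Y$ with these degrees; subdividing each of its $k = |Y| - 1$ edges by the vertices of $X$ yields $T$, with $Y$ independent and each $x \in X$ of degree $2$. Rooting $T_Y$ at any vertex and pairing each subdivision vertex with its child in the rooted tree gives a matching of size $k$ saturating $X$, whence $X$ is a minimum vertex cover of $T$ by K\H{o}nig's theorem.
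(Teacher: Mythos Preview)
Your proof is correct and follows a genuinely different route from the paper's. The paper inducts on $s=\sum_{v_i\in V\setminus X}(d_i-1)$, the total degree excess in $Y=V\setminus X$: the base case $s=0$ (every vertex of $Y$ is a leaf) is exactly the extremal situation handled by Lemma~\ref{lemma1}, and each inductive step deletes a vertex $v_q\in X$ of minimum degree in $X$ together with $d_q-1$ leaves from $Y$, while decreasing one degree in $Y$ by one; the tree is then rebuilt by attaching a star centred at $v_q$. Thus the paper shrinks $X$ at every step and delegates its base case to Lemma~\ref{lemma1}. Your argument instead inducts on $n$ and splits into three structural cases governed by whether $|X|=n/2$ and whether $X$ contains a vertex of degree at least $3$: Case~A shrinks $Y$ rather than $X$, Case~B performs a two-vertex reduction that stays within the tight regime $|X|=n/2$ (and bottoms out at the path $P_4$), and Case~C is a direct construction obtained by subdividing a tree on $Y$. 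The paper's single reduction is shorter and more uniform; your version is more case-heavy but entirely self-contained, never invoking Lemma~\ref{lemma1}.
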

\begin{proof}
Let $d=(d_1,\ldots,d_n)$ and $s=\sum\limits_{i:v_i\in V\setminus X}(d_i-1)$.
We prove the statement by induction on $s$.

First, let $s=0$.
In this case, $d_i=1$ for every $v_i$ in $V\setminus X$.
By (i), this implies $|X|=n-n_1(d)$.
By (ii), $|X|=\min\{ \lfloor n/2\rfloor,n-n_1(d)\}$, and Lemma \ref{lemma1} implies that
there is a tree $T$ with $V(T)=V$, $d_T(v_i)=d_i$ for every $i$ in $[n]$, and $\tau(T)=\nu(T)=|X|$.
Since $n\geq 3$, every edge in $T$ is incident with a vertex in $X$,
which implies that $X$ is a minimum vertex cover in $T$.

Now, let $s>0$.
This implies that $d_p>1$ for some $v_p$ in $V\setminus X$.
By symmetry, we may assume that $d_{p+1}<d_p$.
Let $v_q$ in $X$ be such that $d_q=\min\{ d_i:v_i\in X\}$.
Since $d$ is a tree degree sequence, $n_1(d)\geq d_q$.
Let $d'$ arise from $d$
\begin{itemize}
\item by decreasing $d_p$ by $1$, and
\item by removing $d_q$ as well as $d_q-1$ elements that equal $1$.
\end{itemize}
Since $d_{p+1}<d_p$, the sequence $d'$ is nonincreasing.

For convenience, let $d'=(d'_1,\ldots,d'_{q-1},d'_{q+1},\ldots,d_{n'+1}')$.

If $d_p\geq 3$, then, by (\ref{e1}), 
$$n_1(d')=n_1(d)-(d_q-1)
=2+\sum\limits_{i:d_i\geq 2}(d_i-2)-(d_q-2)-1
=2+\sum\limits_{i:d'_i\geq 2}(d'_i-2),$$ and, if $d_p=2$,
then 
$$n_1(d')=n_1(d)-(d_q-1)+1
=2+\sum\limits_{i:d_i\geq 2}(d_i-2)-(d_q-2)
=2+\sum\limits_{i:d'_i\geq 2}(d'_i-2).$$
By (\ref{e1}), it follows that $d'$ is the degree sequence of a tree of order $n'$ at least $2$.
If $n'=2$, then $d_p=2$, $X=\{ v_q\}$, and 
$d_q=\sum\limits_{i:v_i\in X}d_i<\sum\limits_{i:v_i\in V\setminus X}d_i$,
which is a contradiction. Hence, $n'\geq 3$.

Let $V'=V\setminus (\{ v_q\} \cup \{ v_{n-d_q+2},\ldots,v_n\})$ and $X'=X\setminus \{ v_q\}$.

In order to apply induction, we verify the properties (i), (ii), and (iii) for $d'$, $V'$, and $X'$.

Trivially, $d'_i>1$ for every $v_i$ in $X'$.

If $d_q=2$, then, by (i), 
$|X'|=|X|-1\leq |V\setminus X|-1=|V'\setminus X'|,$
which implies $|X'|\leq n'/2$.
If $d_q\geq 3$, then, by (\ref{e1}) and the choice of $q$, 
we obtain $n_1(d)\geq 2+|X|(d_q-2)$,
which implies 
$$|V\setminus X|\geq 1+n_1(d)\geq 3+|X|(d_q-2)\geq |X|+d_q.$$
We obtain 
$|X'|=|X|-1\leq |V\setminus X|-d_q-1=|V'\setminus X'|$,
which implies $|X'|\leq n'/2$.
Altogether, $|X'|\leq n'/2$ follows in both cases.

Finally, 
$$\sum\limits_{i:v_i\in X'}d'_i
=\sum\limits_{i:v_i\in X}d_i-d_q
\geq \sum\limits_{i:v_i\in V\setminus X}d_i-d_q
=\sum\limits_{i:v_i\in V'\setminus X'}d_i'.$$
Since $\sum\limits_{i:v_i\in V'\setminus X'}(d_i'-1)<s$, we obtain, by induction, 
that there is a tree $T'$ with $V(T')=V'$ such that 
$d_{T'}(v_i)=d_i'$ for every $v_i$ in $V'$, and 
$X'$ is a minimum vertex cover in $T'$.
Let $M'$ be a maximum matching in $T'$.
Let $T$ arise by adding the edge $v_pv_q$ to the union of $T'$ and a star with center vertex $v_q$
and leaves $v_{n-d_q+2},\ldots,v_n$.
Clearly, $T$ is a tree with $V(T)=V$ such that 
$d_T(v_i)=d_i$ for every $v_i$ in $V$, and 
$X=X'\cup \{ v_q\}$ is a vertex cover in $T$.
Since $M=M'\cup \{ v_qv_n\}$ is a matching in $T$ with $|X|=|M|$,
it follows that $X$ is a minimum vertex cover in $T$,
which completes the proof.
\end{proof}

\begin{theorem}\label{theorem1}
Let $(d_1,\ldots,d_n)$ be a a tree degree sequence with $n\geq 3$, and let $\nu$ be an integer.

There is a tree $T$ with $d(T)=(d_1,\ldots,d_n)$ and $\nu(T)=\nu$
if and only if 
\begin{eqnarray}\label{e2}
\min\left\{ k\in \mathbb{N}:\sum\limits_{i=1}^k d_i\geq n-1\right\}
\leq \nu
\leq 
\min\left\{ \left\lfloor \frac{n}{2}\right\rfloor,n-n_1(d)\right\}.
\end{eqnarray}
\end{theorem}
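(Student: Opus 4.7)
The plan is to translate $\nu(T)=\tau(T)$ via K\H{o}nig's theorem and then derive the two bounds separately, using Lemma \ref{lemma1} for the upper endpoint and Lemma \ref{lemma2} for realizability across the whole interval.

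\emph{Necessity of the bounds.} The upper bound $\nu\leq\min\{\lfloor n/2\rfloor,n-n_1(d)\}$ is exactly the content of Lemma \ref{lemma1}. For the lower bound, I would take any tree realization $T$ of $d$, let $C$ be a minimum vertex cover in $T$, and set $k=|C|=\tau(T)=\nu(T)$. Since $C$ covers all $n-1$ edges of $T$, we have $\sum_{v\in C}d_T(v)\geq n-1$; but because $d$ is nonincreasing, the left-hand side is at most $\sum_{i=1}^{k}d_i$, which forces $\sum_{i=1}^{k}d_i\geq n-1$ and hence $\nu(T)=k\geq\min\{k\in\mathbb{N}:\sum_{i=1}^{k}d_i\geq n-1\}$.

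\emph{Sufficiency.} For every integer $\nu$ in the stated interval, I would apply Lemma \ref{lemma2} with the simple choice $X=\{v_1,\ldots,v_\nu\}$. Hypothesis (i) holds since $\nu\leq n-n_1(d)$ forces $d_\nu\geq 2$; hypothesis (ii) is $\nu\leq\lfloor n/2\rfloor$; and hypothesis (iii), rewritten using the identity $\sum_{i=1}^n d_i=2(n-1)$, is equivalent to $\sum_{i=1}^{\nu}d_i\geq n-1$, which holds because $\nu$ is at least $\min\{k\in\mathbb{N}:\sum_{i=1}^{k}d_i\geq n-1\}$. Lemma \ref{lemma2} then yields a tree $T$ realizing $d$ in which $X$ is a minimum vertex cover, so $\nu(T)=\tau(T)=|X|=\nu$.

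The main conceptual step is identifying the correct combinatorial quantity for the lower bound, namely that a vertex cover of size $k$ must satisfy $\sum_{i=1}^{k}d_i\geq n-1$. Once this is in hand, the virtue of Lemma \ref{lemma2} is that by prescribing $X$ to consist of the $\nu$ highest-degree vertices one obtains, in a single uniform construction, a realization with matching number exactly $\nu$ for every value in the interval, so no separate interpolation argument between the two extremes is required.
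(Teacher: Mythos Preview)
Your proof is correct and follows essentially the same approach as the paper: the upper bound via Lemma~\ref{lemma1}, the lower bound by observing that a minimum vertex cover of size $\nu$ has total degree at least $n-1$ and hence $\sum_{i=1}^\nu d_i\geq n-1$, and the sufficiency by applying Lemma~\ref{lemma2} to $X=\{v_1,\ldots,v_\nu\}$ after verifying its three hypotheses. The only cosmetic difference is that the paper derives $\sum_{i=1}^\nu d_i\geq n-1$ by comparing $\sum_{v\in X}d_T(v)\geq\sum_{v\notin X}d_T(v)$ and using $\sum d_i=2(n-1)$, while you count covered edges directly.
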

\begin{proof}
Let $T$ be a tree with $d(T)=(d_1,\ldots,d_n)$ and $\nu=\nu(T)$.
By Lemma \ref{lemma1}, $\nu\leq \min\{ \lfloor n/2\rfloor,n-n_1(d)\}$.
If $X$ is a minimum vertex cover in $T$, then $|X|=\nu$.
Since $V(T)\setminus X$ is independent, 
we obtain
$$\sum\limits_{i=1}^{\nu}d_i
\geq \sum\limits_{u\in X}d_T(u)
\geq \sum\limits_{u\in V(T)\setminus X}d_T(u)
\geq \sum\limits_{i=\nu+1}^{n}d_i,$$
which, using $\sum\limits_{i=1}^nd_i=2(n-1)$, implies $\sum\limits_{i=1}^{\nu}d_i\geq n-1$, 
and thus $\nu\geq \min\left\{ k\in \mathbb{N}:\sum\limits_{i=1}^kd_i\geq n-1\right\}$.

Now, let the integer $\nu$ satisfy (\ref{e2}).
Let $V=\{ v_1,\ldots,v_n\}$ and $X=\{ v_1,\ldots,v_{\nu}\}$.
Since $\nu\leq n-n_1(d)$, we have $d_i>1$ for every $v_i\in X$.
Since $\nu\leq \lfloor n/2\rfloor$, we have $|X|\leq n/2$.
Since $\nu\geq \min\left\{ k\in \mathbb{N}:\sum\limits_{i=1}^k d_i\geq n-1\right\}$ and $\sum\limits_{i=1}^nd_i=2(n-1)$,
we have $\sum\limits_{i:v_i\in X}d_i\geq \sum\limits_{i:v_i\in V\setminus X}d_i$.
By Lemma \ref{lemma2}, there is a tree $T$ with $d(T)=(d_1,\ldots,d_n)$ and $\nu(T)=\nu$,
which completes the proof.
\end{proof} 

\section{Bipartite graphs}\label{secbip}

Our first result in this section establishes the existence of a realization 
of a bipartite degree sequence
with a well specified maximum matching
and minimum vertex cover.

\begin{theorem}\label{theorem2}
If $((a_1,\ldots,a_n),(b_1,\ldots,b_m))$ is a bipartite degree sequence 
that has a realization with matching number $\nu$, 
then there is a realization $G$ 
with partite sets $A=\{ v_1,\ldots,v_n\}$ and $B=\{ w_1,\ldots,w_m\}$,
and an integer $k$ with $0\leq k\leq \nu$ such that 
\begin{enumerate}[(i)]
\item $\{ v_iw_{\nu-i+1}:i\in [\nu]\}$ is a maximum matching in $G$, and
\item $\{ v_i:i\in [k]\}\cup \{ w_j:j\in [\nu-k]\}$ is a minimum vertex cover in $G$.
\end{enumerate}
\end{theorem}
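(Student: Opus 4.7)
The plan is to start from an arbitrary realization of matching number $\nu$, extract the correct value of $k$ from a K\H{o}nig minimum vertex cover, and then use degree-preserving $2$-switches to bring the realization into the canonical form specified by (i) and (ii). Concretely, let $G_0$ be any realization of $((a_1,\ldots,a_n),(b_1,\ldots,b_m))$ with $\nu(G_0)=\nu$. By K\H{o}nig's theorem there is a minimum vertex cover $C_0$ of $G_0$ with $|C_0|=\nu$; set $k=|C_0\cap A|$, so $|C_0\cap B|=\nu-k$. Among the top $k+1$ vertices $v_1,\ldots,v_{k+1}$, at least one, say $v_j$, must lie outside $C_0$, and since all its neighbors lie in $C_0\cap B$, this gives $a_{k+1}\le a_j\le\nu-k$; symmetrically, $b_{\nu-k+1}\le k$. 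These two inequalities provide the key degree-theoretic leverage for what follows.

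For the canonicalization of the cover, I would pick, among all pairs $(G,C)$ where $G$ is a realization with $\nu(G)=\nu$ and $C$ is a minimum vertex cover with $|C\cap A|=k$, one that minimizes $\Phi(G,C)=\sum_{v_i\in C\cap A}i+\sum_{w_j\in C\cap B}j$. I would then argue that such an extremal pair must satisfy $C=\{v_1,\ldots,v_k\}\cup\{w_1,\ldots,w_{\nu-k}\}$: otherwise there exist, say, $v_i\notin C$ with $i\le k$ and $v_{i'}\in C$ with $i'>k$, forcing $a_i\ge a_{i'}$. Using $N(v_i)\subseteq C\cap B$ together with the inequalities above, a suitable $2$-switch (or short chain of such swaps) should yield a new realization $G'$ admitting a minimum cover $C'$ with $\Phi(G',C')<\Phi(G,C)$, contradicting extremality.

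With $C=X\cup Y$ now canonical, where $X=\{v_1,\ldots,v_k\}$ and $Y=\{w_1,\ldots,w_{\nu-k}\}$, K\H{o}nig's duality forces every maximum matching of $G$ to consist of $k$ edges from $X$ into $B\setminus Y$ and $\nu-k$ edges from $Y$ into $A\setminus X$. The prescribed matching $\{v_iw_{\nu-i+1}:i\in[\nu]\}$ is consistent with this: for $i\le k$ the edge lies in $X\times(B\setminus Y)$, and for $i>k$ in $(A\setminus X)\times Y$. A further sequence of $2$-switches that preserve the canonical cover, justified by the standard alternating-path argument for bipartite matchings, can then be applied to install precisely this pairing without destroying the cover.

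I expect the main obstacle to be the canonicalization step when $a_i>a_{i'}$: a direct exchange of $N(v_i)$ and $N(v_{i'})$ alters those individual degrees and is therefore forbidden by the degree sequence. The remedy should be to combine such an exchange with an auxiliary $2$-switch involving a third vertex that compensates for the degree gap, and this is exactly where the inequalities $a_{k+1}\le\nu-k$ and $b_{\nu-k+1}\le k$ extracted at the outset become essential for guaranteeing that a compensating swap is available.
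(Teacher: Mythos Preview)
Your overall strategy---start from an arbitrary realization with matching number $\nu$, read off $k$ from a K\H{o}nig cover, and push the data into canonical form by degree-preserving swaps chosen via an extremality argument---is exactly what the paper does. The paper tracks a triple $(G,M,X)$ and lexicographically minimizes a six-component potential; your $\Phi$ corresponds only to the first two components (canonicalizing the cover), and the remaining four are precisely what your sketch leaves unspecified.

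There are two concrete gaps. First, your ``main obstacle'' rests on a misconception. One never exchanges the full neighborhoods of $v_i$ and $v_{i'}$. With $v_i\notin C$, $v_{i'}\in C$, $a_i\ge a_{i'}$, set $N_{\mathrm{bad}}=N(v_{i'})\setminus Y$; since $N(v_i)\subseteq Y$ we have $|N(v_i)\setminus N(v_{i'})|\ge a_i-(a_{i'}-|N_{\mathrm{bad}}|)\ge |N_{\mathrm{bad}}|$, so there is $S\subseteq N(v_i)\setminus N(v_{i'})$ with $|S|=|N_{\mathrm{bad}}|$, and swapping the edges from $v_{i'}$ to $N_{\mathrm{bad}}$ with the edges from $v_i$ to $S$ is already a union of $2$-switches preserving every degree. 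No third vertex, no compensation, and no appeal to $a_{k+1}\le\nu-k$ is required; the paper never uses that inequality. You should, however, carry a maximum matching along with $C$ (as the paper does), so that after the swap you can exhibit a matching of size $\nu$ in the new graph and hence certify that the modified cover is still minimum; with $\Phi$ alone this is not automatic.

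Second, ``install precisely this pairing via the standard alternating-path argument'' is where most of the remaining work lives, and alternating paths alone do not suffice: they move between maximum matchings of a \emph{fixed} graph, whereas the prescribed edges $v_iw_{\nu-i+1}$ need not be present in the current realization at all. You must argue separately (the paper's $s_3,s_4$) that the matched vertices of $A\setminus X$ can be forced to be $v_{k+1},\ldots,v_\nu$ and those of $B\setminus Y$ to be $w_{\nu-k+1},\ldots,w_\nu$, and then (the paper's $s_5,s_6$) that the pairing inside each of the two blocks can be made antidiagonal. Each of these steps again requires $2$-switches that may change $G$, together with a short case analysis depending on which of the two candidate cross edges already exist.
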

\begin{proof} 
Let $d=(d_A,d_B)=((a_1,\ldots,a_n),(b_1,\ldots,b_m))$.
Let $G$ be a realization of $d$ with matching number $\nu$ and 
partite sets $A=\{ v_1,\ldots,v_n\}$ and $B=\{ w_1,\ldots,w_m\}$,
that is, $d_G(v_i)=a_i$ for every $i$ in $[n]$, and 
$d_G(w_j)=b_j$ for every $i$ in $[m]$.
Let $M$ be a maximum matching in $G$,
and let $X$ be a minimum vertex cover in $G$.
Since $|M|=|X|$, for every edge $e$ in $M$, the set $X$ contains exactly one vertex incident with $e$,
and every vertex in $X$ is incident with an edge in $M$.

Let $X_A=X\cap A$, 
$\overline{X}_A=A\setminus X$,
$X_B=X\cap B$, 
$\overline{X}_B=B\setminus X$,
and $k=|X_A|$.
Let $V(M)$ be the set of vertices of $G$ that are incident with an edge in $M$.
For the tuple $g=(G,M,X)$, let
\begin{itemize}
\item $s_1(g)$ be the number of pairs $(v_r,v_s)$ 
with $v_r\in X_A$, $v_s\in \overline{X}_A$, and $a_r<a_s$,
\item $s_2(g)$ be the number of pairs $(w_r,w_s)$ 
with $w_r\in X_B$, $w_s\in \overline{X}_B$, and $b_r<b_s$,
\item $s_3(g)$ be the number of pairs $(v_r,v_s)$ 
with $v_r\in V(M)\cap \overline{X}_A$, $v_s\in \overline{X}_A\setminus V(M)$, and $a_r<a_s$,
\item $s_4(g)$ be the number of pairs $(w_r,w_s)$ 
with $w_r\in V(M)\cap \overline{X}_B$, $w_s\in \overline{X}_B\setminus V(M)$, and $b_r<b_s$,
\item $s_5(g)$ be the number of pairs $(v_r,v_s)$ with $v_r,v_s\in X_A$ 
such that $a_r<a_s$ and $b_x<b_y$, where $w_x$ and $w_y$ are such that $v_rw_x,v_sw_y\in M$, 
\item $s_6(g)$ be the number of pairs $(w_r,w_s)$ with $w_r,w_s\in X_B$ 
such that $a_x<a_y$ and $b_r<b_s$, where $v_x$ and $v_y$ are such that $v_xw_r,v_yw_s\in M$, and
\item $s(g)=(s_1(g),s_2(g),s_3(g),s_4(g),s_5(g),s_6(g))$.
\end{itemize}
We assume that $g=(G,M,X)$ is chosen in such a way that $s(g)$ is lexicographically minimal.
In order to complete the proof, it suffices to show that $s(g)=(0,0,0,0,0,0)$.

\begin{claim}\label{claim1}
$s_1(g)=s_2(g)=0$.
\end{claim}
\begin{proof}[Proof of Claim \ref{claim1}]
Suppose, for a contradiction, that $s_1(g)>0$.
Let $(v_r,v_s)$ is as in the definition of $s_1(g)$.
Let $N_r$ be the set of neighbors of $v_r$ in $\overline{X}_B$.
Since $a_r<a_s$, and $v_s$ has no neighbor in $\overline{X}_B$,
there is a set $N_s$ of $|N_r|$ neighbors of $v_s$ in $X_B$ that are not adjacent to $v_r$.
If $v_s\in V(M)$, $w_y$ is such that $v_sw_y\in M$, 
and $v_r$ is not adjacent to $w_y$,
then let $w_y$ belong to $N_s$.
Let $G'$ arise from $G$ by
\begin{itemize}
\item removing all edges between $v_s$ and $N_s$, and between $v_r$ and $N_r$, and
\item adding all edges between $v_s$ and $N_r$, and between $v_r$ and $N_s$.
\end{itemize}
Clearly, $G'$ is a realization of $d$, and all vertices have the same degrees in $G'$ as in $G$. 
By construction, $X'=(X\setminus \{ v_r\})\cup \{ v_s\}$ is a vertex cover in $G'$.
Let $w_x$ be such that $v_rw_x\in M$.
By construction,
$$
M'=
\begin{cases}
(M\setminus \{ v_rw_x\})\cup \{ v_sw_x\} & \mbox{, if $v_s\not\in V(M)$, and}\\
(M\setminus \{ v_rw_x,v_sw_y\})\cup \{ v_sw_x,v_rw_y\} & \mbox{, if $v_s\in V(M)$}
\end{cases}$$
is a matching in $G'$.
Since $|X'|=|X|=|M|=|M'|$,
the set $X'$ is a minimum vertex cover in $G'$, and 
$M'$ is a maximum matching in $G'$.
Since $s_1((G',M',X'))<s_1(g)$, we obtain a contradiction to the choice of $g$,
which implies $s_1(g)=0$.
Since $s_2((G',M',X'))=s_2(g)$, 
that is, $s_2$ is not affected by the modifications,
we obtain, by symmetry, $s_2(g)=0$,
which completes the proof of the claim. 
\end{proof} 

\begin{claim}\label{claim2}
$s_3(g)=s_4(g)=0$.
\end{claim}
\begin{proof}[Proof of Claim \ref{claim2}]
Suppose, for a contradiction, that $s_3(g)>0$.
Let $(v_r,v_s)$ is as in the definition of $s_3(g)$.
Let $w_x$ be such that $v_rw_x\in M$.

If $v_s$ is adjacent to $w_x$, 
then $M'=(M\setminus \{ v_rw_x\})\cup \{ v_sw_x\}$ is a maximum matching in $G$,
$s_1((G,M',X))=s_2((G,M',X))=0$, 
$s_3((G,M',X))<s_3(g)$, and
$s_4((G,M',X))=s_4(g)$,
contradicting the choice of $g$.
Hence, $v_s$ is not adjacent to $w_x$.
Since $a_r<a_s$, and all neighbors of $v_s$ are in $X_B$,
there is a neighbor $w_y$ of $v_s$ in $X_B$ that is not adjacent to $v_r$.
Let $G'$ arise from $G$ by removing the edges $v_rw_x$ and $v_sw_y$,
and adding the edges $v_rw_y$ and $v_sw_x$,
and let $M'=(M\setminus \{ v_rw_x\})\cup \{ v_sw_x\}$.
By construction, $X$ is a minimum vertex cover of $G'$,
and $M'$ is a maximum matching in $G'$.
Since 
$s_1((G',M',X))=s_2((G',M',X))=0$, 
$s_3((G',M',X))<s_3(g)$, and
$s_4((G',M',X))=s_4(g)$,
we obtain a contradiction to the choice of $g$.

Altogether, we obtain $s_3(g)=0$.

In both cases, $s_4$ is not affected by the modifications.
By symmetry, this implies $s_4(g)=0$,
which completes the proof of the claim. 
\end{proof} 

\begin{claim}\label{claim3}
$s_5(g)=s_6(g)=0$.
\end{claim}
\begin{proof}[Proof of Claim \ref{claim2}]
Suppose, for a contradiction, that $s_5(g)>0$.
Let $(v_r,v_s)$, $w_x$, and $w_y$ be as in the definition of $s_5(g)$.

First, we assume that the edges $v_rw_y$ and $v_sw_y$ belong to $G$.
Now, $M'=(M\setminus \{ v_rw_x,v_sw_y\})\cup \{ v_sw_x,v_rw_y\}$ is a maximum matching in $G$,
$s_1((G,M',X))=s_2((G,M',X))=s_3((G,M',X))=s_4((G,M',X))=0$, 
$s_5((G,M',X))<s_5(g)$, and
$s_6((G,M',X))=s_6(g)$,
contradicting the choice of $g$.

Next, we assume that neither of the edges $v_rw_y$ and $v_sw_y$ belongs to $G$.
Let $G'$ arise from $G$ by removing the edges $v_rw_x$ and $v_sw_y$,
and adding the edges $v_rw_y$ and $v_sw_x$,
and let $M'=(M\setminus \{ v_rw_x,v_sw_y\})\cup \{ v_sw_x,v_rw_y\}$.
By construction, $X$ is a minimum vertex cover of $G'$,
and $M'$ is a maximum matching in $G'$.
Since 
$s_1((G,M',X))=s_2((G,M',X))=s_3((G,M',X))=s_4((G,M',X))=0$, 
$s_5((G',M',X))<s_5(g)$, and
$s_6((G',M',X))=s_6(g)$,
we obtain a contradiction to the choice of $g$.

Next, we assume that the edge $v_rw_y$ belongs to $G$ but the edge $v_sw_x$ does not.
Since $a_r<a_s$, there is a vertex $w_z$ that is a neighbor of $v_s$ but not $v_r$.
Let $G'$ arise from $G$ by removing the edges $v_rw_x$ and $v_sw_z$,
and adding the edges $v_rw_z$ and $v_sw_x$,
and let $M'=(M\setminus \{ v_rw_x,v_sw_y\})\cup \{ v_sw_x,v_rw_y\}$.
By construction, $X$ is a minimum vertex cover of $G'$,
and $M'$ is a maximum matching in $G'$.
Since 
$s_1((G,M',X))=s_2((G,M',X))=s_3((G,M',X))=s_4((G,M',X))=0$, 
$s_5((G',M',X))<s_5(g)$, and
$s_6((G',M',X))=s_6(g)$,
we obtain a contradiction to the choice of $g$.

Finally, we assume that the edge $v_sw_x$ belongs to $G$ but the edge $v_rw_y$ does not.
Since $b_x<b_y$, there is a vertex $v_t$ that is a neighbor of $w_y$ but not $w_x$.
Let $G'$ arise from $G$ by removing the edges $v_rw_x$ and $v_tw_y$,
and adding the edges $v_rw_y$ and $v_tw_x$,
and let $M'=(M\setminus \{ v_rw_x,v_sw_y\})\cup \{ v_sw_x,v_rw_y\}$.
By construction, $X$ is a minimum vertex cover of $G'$,
and $M'$ is a maximum matching in $G'$.
Since 
$s_1((G,M',X))=s_2((G,M',X))=s_3((G,M',X))=s_4((G,M',X))=0$, 
$s_5((G',M',X))<s_5(g)$, and
$s_6((G',M',X))=s_6(g)$,
we obtain a contradiction to the choice of $g$.

Altogether, we obtain $s_5(g)=0$.

In all four cases, $s_6$ is not affected by the modifications.
By symmetry, this implies $s_6(g)=0$,
which completes the proof of the claim. 
\end{proof} 
As observed above, the three claims complete the proof.
\end{proof} 
Theorem \ref{theorem2} allows to reformulate the considered problems using network flows.

Therefore, 
let $a_1,\ldots,a_n,b_1,\ldots,b_m,\nu$, and $k$ be nonnegative integers with $k\leq \nu\leq \min\{ n,m\}$.
Let $(d_A,d_B)=((a_1,\ldots,a_n),(b_1,\ldots,b_m))$, 
and let $N(d_A,d_B,\nu,k)$
be the network $(D,c)$, where
\begin{itemize}
\item $D$ is a digraph with vertex set 
$$\{ s\}\cup \{ v_1,\ldots,v_n\}\cup \{ w_1,\ldots,w_m\}\cup \{ t\}$$
and arc set
\begin{eqnarray*}
&&\{ (s,v_i):i\in [n]\}\\
&\cup &\bigg\{ (v_i,w_j):(i,j)\in [n]\times [m]\setminus \Big\{ (i,j)\in [n]\times [m]:\big((i>k)\wedge (j>\nu-k)\big)\vee \big(i+j=\nu+1\big)\Big\}
\bigg\}\\
&\cup &\{ (w_j,t):j\in [m]\}
\end{eqnarray*}
and
\item $c:A(D)\to\mathbb{R}_{\geq 0}$ is a capacity function with
$$c((x,y))=
\begin{cases}
a_i-1 & \mbox{, if $x=s$, $y=v_i$, and $i\in [k]$,}\\
a_i & \mbox{, if $x=s$, $y=v_i$, and $i\in [n]\setminus [k]$,}\\
b_j-1 & \mbox{, if $x=w_j$, $y=t$, and $j\in [\nu-k]$,}\\
b_j & \mbox{, if $x=w_j$, $y=t$, and $j\in [m]\setminus [\nu-k]$, and}\\
1 & \mbox{, otherwise.}
\end{cases}
$$
\end{itemize}
Adding arcs $(v_i,w_j)$ to $D$ only if $i\leq k$ or $j\leq \nu-k$,
reflects that $\{ v_1,\ldots,v_k\}\cup \{ w_1,\ldots,w_{\nu-k}\}$
is a vertex cover in the graph $G$ from Theorem \ref{theorem2}.
Not adding the arcs $(v_i,w_j)$ with $i+j=\nu+1$, 
and reducing the capacities of the arcs $(s,v_i)$ and $(w_j,t)$ by $1$,
reflects that $\{ v_1w_{\nu},\ldots,v_{\nu}w_1\}$ 
is a matching in $G$.

\begin{theorem}\label{theorem3}
Let $a_1,\ldots,a_n,b_1,\ldots,b_m$, and $\nu$ be nonnegative integers with $\nu\leq \min\{ n,m\}$,
and let $(d_A,d_B)=((a_1,\ldots,a_n),(b_1,\ldots,b_m))$.

$(d_A,d_B)$ is a bipartite degree sequence 
that has a realization with matching number $\nu$
if and only if 
there is an integer $k$ with $0\leq k\leq \nu$ 
such that the network $N(d_A,d_B,\nu,k)$
has an $s$-$t$-flow of value $\sum\limits_{i=1}^na_i-\nu$.
\end{theorem}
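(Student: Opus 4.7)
The plan is to handle the two directions of the equivalence separately. For the ``only if'' direction I would invoke Theorem~\ref{theorem2} to fix a realization $G$, an integer $k\in\{0,\ldots,\nu\}$, a maximum matching $M=\{v_iw_{\nu-i+1}:i\in[\nu]\}$, and a minimum vertex cover $X=\{v_i:i\in[k]\}\cup\{w_j:j\in[\nu-k]\}$. I then define a flow $f$ on $N(d_A,d_B,\nu,k)$ by $f((v_i,w_j))=1$ whenever $v_iw_j\in E(G)\setminus M$ (and zero on the other middle arcs), extending to the source and sink arcs by conservation. The verification reduces to checking: every non-matching edge is indeed an arc of $D$ (since $X$ covers every edge, at least one endpoint satisfies $i\leq k$ or $j\leq\nu-k$, and $v_iw_j\notin M$ gives $i+j\neq\nu+1$); the capacities are respected (for $i\in[k]$, $v_i$ is the $A$-endpoint of $v_iw_{\nu-i+1}\in M$, so it has exactly $a_i-1$ non-matching neighbors, matching $c((s,v_i))=a_i-1$, while for other indices the inequality is loose, and analogously on the $w$-side); and the flow value is $|E(G)\setminus M|=\sum_i a_i-\nu$.

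For the ``if'' direction, I would take an integer $s$-$t$-flow $f$ in $N(d_A,d_B,\nu,k)$ of value $V=\sum_i a_i-\nu$ and define $G$ with edge set $M\cup\{v_iw_j:f((v_i,w_j))=1\}$, where $M=\{v_iw_{\nu-i+1}:i\in[\nu]\}$. The matching-number assertion is then immediate: $M$ is a matching of size $\nu$, and $X=\{v_i:i\in[k]\}\cup\{w_j:j\in[\nu-k]\}$ is a vertex cover of $G$ by the construction of $D$ (every middle arc satisfies $i\leq k$ or $j\leq\nu-k$) and of $M$ (every matching edge has $i\leq k$ or $\nu-i+1\leq\nu-k$), so $\tau(G)\leq\nu$ and K\"onig's theorem gives $\nu(G)=\nu$.

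The real work is proving $d_G(v_i)=a_i$ and $d_G(w_j)=b_j$ for every $i$ and $j$. Flow conservation translates these to the per-arc identities $f((s,v_i))=a_i-[i\leq\nu]$ and $f((w_j,t))=b_j-[j\leq\nu]$. The totals match, but the capacities in $N$ only force $f((s,v_i))\leq a_i-[i\leq k]$, which is one unit looser than needed for $k<i\leq\nu$ (and symmetrically for $\nu-k<j\leq\nu$), so the given flow need not satisfy the per-vertex identities, and this is the main obstacle. My plan to close the gap is to argue that $f$ can be rerouted, without changing its value, into a flow meeting the per-arc identities: for an ``excess'' vertex $v_i$ (with $k<i\leq\nu$ and $f((s,v_i))=a_i$) paired with a ``deficit'' vertex $v_{i'}$ I look for a cycle in the residual network through $s$, typically of the form $s\to v_{i'}\to w_j\to v_i\to s$ (or a longer alternating cycle when the short one is blocked), and shift one unit along it to transfer flow from the excess to the deficit. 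The restricted shape of the middle arc set $D$, in particular that rows $k<i\leq\nu$ can reach only $\{w_j:j\in[\nu-k]\}$, should guarantee that such residual structures always exist; iterating the exchange on both partite sides yields a flow satisfying the per-vertex identities, so that $G$ genuinely realizes $(d_A,d_B)$.
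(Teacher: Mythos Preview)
Your forward direction coincides with the paper's. For the reverse direction you have correctly spotted an inconsistency, but you are fighting a typo rather than a genuine obstacle. The printed capacity table has $c((s,v_i))=a_i-1$ only for $i\in[k]$ and $c((w_j,t))=b_j-1$ only for $j\in[\nu-k]$, yet the paper's own proof uses $i\in[\nu]$ and $j\in[\nu]$: the sentence ``the $s$-$t$-cut generated by $s$ has capacity $\sum_{i=1}^n a_i-\nu$'' is false with the printed capacities (that cut then has capacity $\sum_i a_i-k$) and is correct only under the reading $c((s,v_i))=a_i-1$ for all $i\in[\nu]$. The explanatory paragraph preceding the theorem says the same thing, namely that the capacity reduction ``reflects that $\{v_1w_\nu,\ldots,v_\nu w_1\}$ is a matching in $G$,'' i.e.\ one unit is subtracted at each of the $\nu$ matched vertices on each side. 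With the intended capacities, a flow of value $\sum_i a_i-\nu$ saturates every arc leaving $s$ and (using $\sum_i a_i=\sum_j b_j$) every arc entering $t$, so flow conservation at each $v_i$ and $w_j$ forces $d_G(v_i)=a_i$ and $d_G(w_j)=b_j$ immediately. That is the whole of the paper's argument at this step.

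Your residual-cycle repair is therefore unnecessary, and as written it is only a plan: you assert that the restricted arc pattern ``should guarantee that such residual structures always exist,'' but you do not prove it, and the claim is not self-evident. You would have to argue simultaneously about excesses on the $A$-side and on the $B$-side, and nothing rules out that a chosen deficit vertex $v_{i'}$ has no middle arc to any $w_j$ currently carrying flow from the excess vertex $v_i$, so longer alternating walks would need to be produced and shown to terminate. In short: read the network with $[\nu]$ in place of $[k]$ and $[\nu-k]$ in the source and sink capacities; then your reverse direction collapses to the paper's one-line verification.
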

\begin{proof}
If $(d_A,d_B)$ has a realization with matching number $\nu$,
then let $G$ and $k$ be as in Theorem \ref{theorem2}.
Let $N=(D,c)$ be the network $N(d_A,d_B,\nu,k)$.
Setting the flow values within $N$ to 
\begin{itemize}
\item $a_i-1$ on the arc $(s,v_i)$ for every $i$ in $[k]$,
\item $a_i$ on the arc $(s,v_i)$ for every $i$ in $[n]\setminus [k]$,
\item $b_j-1$ on the arc $(w_j,t)$ for every $j$ in $[\nu-k]$, 
\item $b_j$ on the arc $(w_j,t)$ for every $j$ in $[m]\setminus [\nu-k]$, and 
\item $1$ on every arc of $D$ that corresponds to an edge of $G$ 
that does not belong to the maximum matching $\{ v_iw_{\nu-i+1}:i\in [\nu]\}$ in $G$,
\end{itemize}
yields an $s$-$t$-flow in $N$ of value $\sum\limits_{i=1}^na_i-\nu$.

Conversely, let the integer $k$ with $0\leq k\leq \nu$ be such that the network $N=N(d_A,d_B,\nu,k)$
has an $s$-$t$-flow $f$ of value $\sum\limits_{i=1}^na_i-\nu$.
Note that the $s$-$t$-cut generated by $s$ has capacity $\sum\limits_{i=1}^na_i-\nu$,
which implies that $f$ is a maximum flow \cite{fofu}.
Since all capacities within $N$ are integral,
we may assume that $f$ has only integral values \cite{dafo}.
Let $G$ be the bipartite graph with partite sets $A=\{ v_1,\ldots,v_n\}$ and $B=\{ w_1,\ldots,w_m\}$
whose edge set consists of 
\begin{itemize}
\item the edges $v_iw_j$ for every arc $(v_i,w_j)$ of $D$ from $A$ to $B$ with $f((v_i,w_j))=1$, and
\item the edges in $\{ v_iw_{\nu-i+1}:i\in [\nu]\}$.
\end{itemize}
By construction, $G$ is a realization of $(d_A,d_B)$,
$M$ is a matching in $G$, and 
$X=\{ v_i:i\in [k]\}\cup \{ w_j:j\in [\nu-k]\}$ is a vertex cover in $G$.
Since $|M|=|X|$, the matching $M$ is a maximum matching in $G$,
which completes the proof.
\end{proof}
Our next goal is to reduce the existence of a flow as in Theorem \ref{theorem3}
to a cubic number of inequalities similarly as in the Gale-Ryser Theorem.
We use the Max-Flow-Min-Cut Theorem \cite{fofu},
and our approach is inspired by proofs of the Gale-Ryser Theorem 
using network flows.

Therefore, let $(d_A,d_B)$, $\nu$, $k$, and $N(d_A,d_B,\nu,k)$ be as above. Abbreviate $N(d_A,d_B,\nu,k)$ as $N$.
We consider an $s$-$t$-cut $C$ in $N$ generated by a set 
$\{ s\}\cup S_1\cup S_2\cup T_1\cup T_2$, where
\begin{eqnarray*}
S_1 &\subseteq & \{ v_1,\ldots,v_k\},\\
S_2 &\subseteq & \{ v_{k+1},\ldots,v_n\},\\
T_1 &\subseteq & \{ w_1,\ldots,w_{\nu-k}\},\mbox{ and}\\
T_2 &\subseteq & \{ w_{\nu-k+1},\ldots,w_m\}.
\end{eqnarray*}
In view of the structure of $D$, 
the capacity ${\rm cap}(C)$ 
is the sum of the capacities of 
\begin{itemize}
\item the arcs from $s$ to $\{ v_1,\ldots,v_n\}\setminus (S_1\cup S_2)$,
\item the arcs from $S_1\cup S_2$ to $\{ w_1,\ldots,w_n\}\setminus (T_1\cup T_2)$, and 
\item the arcs from $T_1\cup T_2$ to $t$.
\end{itemize}
Recall that every arc from $S_1\cup S_2$ to $\{ w_1,\ldots,w_n\}\setminus (T_1\cup T_2)$
has capacity $1$.
Note that there are 
no arcs in $D$ from $S_2$ to $\{ w_{\nu-k+1},\ldots,w_m\}\setminus T_2$, and that 
the arcs from $S_1$ to $\{ w_1,\ldots,w_{\nu-k}\}\setminus T_1$
contribute exactly $|S_1|\cdot |T_1|$ to ${\rm cap}(C)$,
that is, their contribution does not depend on the specific choice of $S_1$ and $T_1$ but only on the cardinalities of these sets.
This last observation implies that, if we fix the cardinalities of $S_1$ and $T_1$,
then minimizing the capacity ${\rm cap}(C)$ of the cut $C$ 
splits into the two completely independent tasks of
\begin{itemize}
\item minimizing the contribution to ${\rm cap}(C)$ of 
the arcs from $S_1$ to $\{ w_{\nu-k+1},\ldots,w_m\}\setminus T_2$, and
\item minimizing the contribution to ${\rm cap}(C)$ of 
the arcs from $S_2$ to $\{ w_1,\ldots,w_{\nu-k}\}\setminus T_1$.
\end{itemize}
We introduce some properties (1) to (6) that the cut $C$ may have,
and if $C$ has all these properties, then we call it {\it clean}.
\begin{itemize}
\item[(1)] If $v_i\in S_1$ and $v_j\in \{ v_1,\ldots,v_k\}\setminus S_1$, then $a_i\geq a_j$.
\item[(2)] If $w_i\in T_1$ and $w_j\in \{ w_1,\ldots,w_{\nu-k}\}\setminus T_1$, then $b_i\leq b_j$.
\item[(3)] If $v_i\in S_2$ and $v_j\in \{ v_{k+1},\ldots,v_n\}\setminus S_2$, then $a_i\geq a_j$.
\item[(4)] If $w_i\in T_2$ and $w_j\in \{ w_{\nu-k+1},\ldots,w_m\}\setminus T_2$, then $b_i\leq b_j$.
\end{itemize}
If $C$ has property (1), then there is some integer $a'$ such that $S_1$ contains
all $v_i$ in $\{ v_1,\ldots,v_k\}$ with $a_i>a'$,
no $v_i$ in $\{ v_1,\ldots,v_k\}$ with $a_i<a'$, and 
some $v_i$ in $\{ v_1,\ldots,v_k\}$ with $a_i=a'$.
Note that $a'$ is uniquely determined 
if there is some $v_i\in S_1$ and some $v_j\in \{ v_1,\ldots,v_k\}\setminus S_1$
with $a_i=a_j=a'$. This implies that the following property is well defined.
\begin{itemize}
\item[(5)] If $v_i\in S_1$, $v_j\in \{ v_1,\ldots,v_k\}\setminus S_1$, and $a_i=a_j=a'$, then $i>j$.
\end{itemize}
Similarly, 
if $C$ has property (3), then there is some integer $a''$ such that $S_2$ contains
all $v_i$ in $\{ v_{k+1},\ldots,v_n\}$ with $a_i>a''$,
no $v_i$ in $\{ v_{k+1},\ldots,v_n\}$ with $a_i<a''$, and 
some $v_i$ in $\{ v_{k+1},\ldots,v_n\}$ with $a_i=a''$.
Again, $a''$ is uniquely determined 
if there is some $v_i\in S_2$ and some $v_j\in \{ v_{k+1},\ldots,v_n\}\setminus S_2$
with $a_i=a_j=a''$.
\begin{itemize}
\item[(6)] If $v_i\in S_2$, $v_j\in \{ v_{k+1},\ldots,v_n\}\setminus S_2$, and $a_i=a_j=a''$, then $i>j$.
\end{itemize}

\begin{lemma}\label{lemma3}
Some $s$-$t$-cut in $N$ of minimum capacity is clean.
\end{lemma}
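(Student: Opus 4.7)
\emph{Proof plan.} The plan is to start with any minimum $s$-$t$-cut $C$ in $N$, with sides described by $(S_1,S_2,T_1,T_2)$, and to massage it into a clean one by local swap operations. The argument splits naturally into enforcing the degree-monotonicity properties (1)--(4) first, and then the tie-breaking properties (5)--(6).

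For (1), if $v_i\in S_1$ and $v_j\in\{v_1,\ldots,v_k\}\setminus S_1$ satisfy $a_i<a_j$, I would swap these two vertices across the cut. Careful bookkeeping, using that each $v_\ell$ with $\ell\le k$ is incident in $D$ to every $w$-vertex except its matching target $w_{\nu+1-\ell}$, yields capacity change
\[
(a_i-a_j)+\bigl([w_{\nu+1-i}\in B\setminus(T_1\cup T_2)]-[w_{\nu+1-j}\in B\setminus(T_1\cup T_2)]\bigr)\le (a_i-a_j)+1\le 0,
\]
so the swap keeps $C$ minimum and strictly increases the bounded potential $\sum_{v_l\in S_1}a_l$. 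Analogous swaps with monotone potentials $-\sum_{w_l\in T_1}b_l$, $\sum_{v_l\in S_2}a_l$, and $-\sum_{w_l\in T_2}b_l$ enforce (2), (3), (4); since each swap touches only one of the four sets $S_1,T_1,S_2,T_2$, the four properties are independent and can be imposed in any order without mutual interference.

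The tricky part is (5) (and symmetrically (6)), because once a degree tie is reached, the naive swap may alter the capacity by $\pm 1$. Assume the configuration $v_i\in S_1$, $v_j\in\{v_1,\ldots,v_k\}\setminus S_1$, $a_i=a_j$, $i<j$. The single swap changes the capacity by $[w_{\nu+1-i}\notin T_2]-[w_{\nu+1-j}\notin T_2]$; the value $-1$ contradicts minimality of $C$, the value $0$ already yields a free swap that raises the potential $\sum_{v_l\in S_1}l$, and the only obstruction is the case $w_{\nu+1-j}\in T_2$, $w_{\nu+1-i}\notin T_2$. In that case I would perform a \emph{combined} swap that simultaneously exchanges $v_i\leftrightarrow v_j$ and $w_{\nu+1-i}\leftrightarrow w_{\nu+1-j}$. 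Property (4) combined with the nonincreasing order of the $b$'s forces $b_{\nu+1-i}=b_{\nu+1-j}$, so the $(s,v_\cdot)$ and $(w_\cdot,t)$ contributions cancel exactly, and the bipartite arcs cancel pairwise precisely because the two matching arcs $(v_i,w_{\nu+1-i})$ and $(v_j,w_{\nu+1-j})$ are absent from $D$. Hence the combined swap preserves the capacity and properties (1)--(4), while strictly increasing $\sum_{v_l\in S_1}l$; finitely many iterations establish (5), and the symmetric combined swap involving $T_1$ establishes (6). The serious technical step I expect is exactly this bipartite-arc accounting inside the combined swap, where the absence of the matching arcs is the reason every affected arc pairs up with an oppositely signed partner.
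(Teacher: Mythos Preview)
Your approach is essentially the same as the paper's: both start from a minimum cut and repair it by swap operations, first enforcing the degree-monotonicity properties (1)--(4) via single swaps, then the tie-breaking properties (5)--(6) via the same combined swap that simultaneously exchanges $v_i\leftrightarrow v_j$ and $w_{\nu+1-i}\leftrightarrow w_{\nu+1-j}$, using property (4) (resp.\ (2)) to force $b_{\nu+1-i}=b_{\nu+1-j}$ in the obstructive case. The only cosmetic difference is that the paper packages the argument as a single lexicographic extremal choice of the cut, whereas you run the swaps sequentially with explicit monotone potentials; your write-up also compresses the case distinctions (in particular for (3) and (6), where one must separate $i\le \nu$ from $i>\nu$ because the capacity of $(s,v_i)$ and the presence of the missing matching arc differ), but the underlying bookkeeping is identical.
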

\begin{proof}
Let $C$ be a minimum $s$-$t$-cut in $N$ 
generated by the sets $S_1$, $S_2$, $T_1$, and $T_2$ as above.
For every $\ell\in [4]$, let $s_\ell(C)$ be the number of pairs $(v_i,v_j)$ violating property ($\ell$).
Furthermore, 
if $C$ has property (1), 
then let $s_5(C)$ be the number of pairs $(v_i,v_j)$ violating property (5),
and 
if $C$ has property (3), 
then let $s_6(C)$ be the number of pairs $(v_i,v_j)$ violating property (6).
If $C$ fails to have property (1) or (3),
then let $s_5(C)$ and $s_6(C)$ be $\infty$, respectively.
Let $s(C)=(s_1(C),s_2(C),s_3(C),s_4(C),s_5(C),s_6(C))$.

We assume that $C$ is chosen in such a way that $s(C)$ is lexicographically minimal.
In order to complete the proof, it suffices to show that $s(C)=(0,0,0,0,0,0)$.

\setcounter{claim}{0}

\begin{claim}\label{claim4}
$s_1(C)=s_2(C)=0$.
\end{claim}
\begin{proof}[Proof of Claim \ref{claim4}]
Suppose, for a contradiction, that $(v_i,v_j)$ violates property (1),
that is, $v_i\in S_1$, $v_j\in \{ v_1,\ldots,v_k\}\setminus S_1$, but $a_i<a_j$.
Since the number of outneighbors of $v_i$ and $v_j$ in 
$\{ w_1,\ldots,w_m\}\setminus (T_1\cup T_2)$ 
differs by at most one,
and the capacity of the arc $(s,v_i)$ is larger than the capacity of the arc $(s,v_j)$,
replacing $v_i$ within $S_1$ by $v_j$ leads to a cut $C'$ 
for which $s(C')$ is lexicographically smaller than $s(C)$,
which contradicts the choice of $C$.
This implies $s_1(C)=0$.
Note that $s_2(C')=s_2(C)$.
A completely symmetric argument implies $s_2(C)=0$,
which completes the proof of the claim.
\end{proof}

\begin{claim}\label{claim5}
$s_3(C)=s_4(C)=0$.
\end{claim}
\begin{proof}[Proof of Claim \ref{claim5}]
Suppose, for a contradiction, that $(v_i,v_j)$ violates property (3),
that is, $v_i\in S_2$, $v_j\in \{ v_{k+1},\ldots,v_n\}\setminus S_2$, but $a_i<a_j$.
By the ordering of $d_A$, we have $j<i$.
Similarly as above,
let $C'$ be the cut generated by replacing $v_i$ within $S_2$ by $v_j$.
We consider two cases.

First, we assume that $i\geq \nu+1$.
By construction, 
every outneighbor of $v_j$ in $\{ w_1,\ldots,w_m\}\setminus (T_1\cup T_2)$
is also an outneighbor of $v_i$,
and the capacity of the arc $(s,v_i)$ is at most the capacity of the arc $(s,v_j)$,
regardless of whether $j$ is at most $\nu$ or bigger.
We obtain the contradiction that either ${\rm cap}(C')<{\rm cap}(C)$
or ${\rm cap}(C')={\rm cap}(C)$ but $s(C')$ is lexicographically smaller than $s(C)$.

Next, we assume that $i\leq \nu$.
By construction, 
the number of outneighbors of $v_i$ in $\{ w_1,\ldots,w_m\}\setminus (T_1\cup T_2)$
and the number of outneighbors of $v_j$ in that set
differ by at most one,
and the capacity of the arc $(s,v_i)$ is strictly smaller than the capacity of the arc $(s,v_j)$.
We obtain the same contradiction as above,
which implies $s_3(C)=0$.

Note that $(s_1(C'),s_2(C'),s_4(C'))=(s_1(C),s_2(C),s_4(C))$.
A completely symmetric argument implies $s_4(C)=0$,
which completes the proof of the claim.
\end{proof}
At this point we have already established that $C$ satisfies properties (1) to (4).
Let $a'$ be as in the definition of property (5).

\begin{claim}\label{claim6}
$s_5(C)=0$.
\end{claim}
\begin{proof}[Proof of Claim \ref{claim6}]
Suppose, for a contradiction, that $(v_i,v_j)$ violates property (5),
that is, $v_i\in S_1$, $v_j\in \{ v_1,\ldots,v_k\}\setminus S_1$, 
$a_i=a_j=a'$, but $i<j$.
Note that the arcs $(s,v_i)$ and $(s,v_j)$ both have capacity $a'-1$.
We consider two cases.

First, we assume that $w_{\nu-i+1}\in T_2$ or $w_{\nu-j+1}\not\in T_2$.
In this case, 
the number of outneighbors of $v_j$ in $\{ w_1,\ldots,w_m\}\setminus (T_1\cup T_2)$
is at most 
the number of outneighbors of $v_i$ in that set.
If $C'$ is the cut generated by replacing $v_i$ within $S_1$ by $v_j$,
then $s_1(C')=s_2(C')=s_3(C')=s_4(C')=0$, 
${\rm cap}(C')\leq {\rm cap}(C)$, but $s_5(C')<s_5(C)$,
which is a contradiction.

Next, we assume that $w_{\nu-i+1}\not\in T_2$ and $w_{\nu-j+1}\in T_2$.
Since $\nu-i+1>\nu-j+1$, the ordering of $d_B$ implies $b_{\nu-i+1}\leq b_{\nu-j+1}$.
By property (4), we have $b_{\nu-i+1}\geq b_{\nu-j+1}$.
Altogether, we obtain $b_{\nu-i+1}=b_{\nu-j+1}$.
If $C''$ is the cut generated by replacing $v_i$ within $S_1$ by $v_j$,
and replacing $w_{\nu-j+1}$ within $T_2$ by $w_{\nu-i+1}$, 
then $s_1(C')=s_2(C')=s_3(C')=s_4(C')=0$, 
${\rm cap}(C')={\rm cap}(C)$, but $s_5(C')<s_5(C)$,
which is a contradiction, and completes the proof of the claim.
\end{proof}
Let $a''$ be as in the definition of property (6).

\begin{claim}\label{claim7}
$s_6(C)=0$.
\end{claim}
\begin{proof}[Proof of Claim \ref{claim7}]
Suppose, for a contradiction, that $(v_i,v_j)$ violates property (6),
that is, $v_i\in S_2$, $v_j\in \{ v_{k+1},\ldots,v_n\}\setminus S_2$, 
$a_i=a_j=a''$, but $i<j$.
Now, the arcs $(s,v_i)$ and $(s,v_j)$ both have a capacity in $\{ a''-1,a''\}$.
Let $C'$ be the cut generated by replacing $v_i$ within $S_2$ by $v_j$.
We consider four cases.

First, we assume that $i\geq \nu+1$,
which implies that the arcs $(s,v_i)$ and $(s,v_j)$ both have capacity $a''$.
Since $v_i$ and $v_j$ have the same outneighbors in 
$\{ w_1,\ldots,w_m\}\setminus (T_1\cup T_2)$,
we obtain
$s_1(C')=s_2(C')=s_3(C')=s_4(C')=s_5(C')=0$, 
${\rm cap}(C')={\rm cap}(C)$, but $s_6(C')<s_6(C)$,
which is a contradiction.

Next, we assume that $i\leq \nu$ and $j\geq \nu+1$,
which implies that the capacity of the arc $(s,v_i)$ is $a''-1$,
and the capacity of the arc $(s,v_j)$ is $a''$.
Since 
the number of outneighbors of $v_j$ in $\{ w_1,\ldots,w_m\}\setminus (T_1\cup T_2)$
is at most 
the number of outneighbors of $v_i$ in that set,
we obtain 
$s_1(C')=s_2(C')=s_3(C')=s_4(C')=s_5(C')=0$, 
${\rm cap}(C')\leq {\rm cap}(C)$, but $s_6(C')<s_6(C)$,
which is a contradiction.

Next, we assume that $j\leq \nu$ and that $w_{\nu-i+1}\in T_1$ or $w_{\nu-j+1}\not\in T_1$,
which implies that the arcs $(s,v_i)$ and $(s,v_j)$ both have capacity $a''-1$.
Again,
the number of outneighbors of $v_j$ in $\{ w_1,\ldots,w_m\}\setminus (T_1\cup T_2)$
is at most 
the number of outneighbors of $v_i$ in that set,
and we obtain the same contradiction as in the previous case.

Finally, we assume that $j\leq \nu$, $w_{\nu-i+1}\not\in T_1$, and $w_{\nu-j+1}\in T_1$.
Again, the arcs $(s,v_i)$ and $(s,v_j)$ both have capacity $a''-1$.
Since $\nu-i+1>\nu-j+1$, the ordering of $d_B$ implies $b_{\nu-i+1}\leq b_{\nu-j+1}$.
By property (2), we have $b_{\nu-i+1}\geq b_{\nu-j+1}$.
Altogether, we obtain $b_{\nu-i+1}=b_{\nu-j+1}$.
If $C''$ is the cut generated by replacing $v_i$ within $S_2$ by $v_j$,
and replacing $w_{\nu-j+1}$ within $T_1$ by $w_{\nu-i+1}$, 
then $s_1(C')=s_2(C')=s_3(C')=s_4(C')=s_5(C')=0$, 
${\rm cap}(C')={\rm cap}(C)$, but $s_6(C')<s_6(C)$,
which is a contradiction, and completes the proof of the claim.
\end{proof}
As observed above the four claims complete the proof.
\end{proof}
The following lemma already contains expressions 
similar to those in the Gale-Ryser Theorem.

\begin{lemma}\label{lemma4}
Let $S_1\subseteq \{ v_1,\ldots,v_k\}$ satisfy properties (1) and (5),
and 
let $S_2\subseteq \{ v_{k+1},\ldots,v_n\}$ satisfy properties (3) and (6).
The minimum capacity of an $s$-$t$-cut $C$ in $N$ generated by a set $X$
with $S_1=X\cap \{ v_1,\ldots,v_k\}$ and
$S_2=X\cap \{ v_{k+1},\ldots,v_n\}$ equals
\begin{eqnarray*}
\sum\limits_{i\in S_1\cup S_2}c((s,v_i))
+
\sum\limits_{j\in [m]}\min\Big\{c((w_j,t)),\big|N^-_D(w_j)\cap (S_1\cup S_2)\big|\Big\}.
\end{eqnarray*}
\end{lemma}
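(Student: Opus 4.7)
The plan is to decompose $\mathrm{cap}(C)$ into the three pieces already identified just before the lemma---arcs from $s$ into $\{v_1,\ldots,v_n\}\setminus(S_1\cup S_2)$, unit-capacity arcs from $S_1\cup S_2$ into $\{w_1,\ldots,w_m\}\setminus(T_1\cup T_2)$, and arcs $(w_j,t)$ with $w_j\in T_1\cup T_2$---and then to minimize over admissible $(T_1,T_2)$. With $S_1,S_2$ fixed, the first piece is constant, so only the latter two pieces vary with $(T_1,T_2)$.

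I would next reindex the latter two pieces by $w_j$: each $w_j$ contributes $c((w_j,t))$ if $w_j\in T_1\cup T_2$, and $|N^-_D(w_j)\cap(S_1\cup S_2)|$ (the number of unit-capacity arcs from $S_1\cup S_2$ ending at $w_j$) otherwise. Because $T_1$ and $T_2$ range independently over subsets of the disjoint index ranges $\{w_1,\ldots,w_{\nu-k}\}$ and $\{w_{\nu-k+1},\ldots,w_m\}$, the membership of each $w_j$ in $T_1\cup T_2$ may be chosen independently of the other $w_{j'}$. Minimizing per $w_j$ yields $\min\{c((w_j,t)),|N^-_D(w_j)\cap(S_1\cup S_2)|\}$, and summing over $j\in[m]$ produces the second summand of the claimed formula; adding the fixed first piece recovers the full expression.

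The hypotheses (1), (3), (5), (6) on $S_1,S_2$ are not used here---they matter only for the outer optimization that combines this lemma with Lemma~\ref{lemma3} to restrict attention to clean cuts. I foresee no real obstacle: the argument is essentially straight bookkeeping, and the only mildly delicate point is the independence of the membership choices across distinct $w_j$, which is immediate from the disjoint index ranges for $T_1$ and $T_2$.
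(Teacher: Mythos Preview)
Your approach is essentially identical to the paper's: both decompose $\mathrm{cap}(C)$ into the fixed contribution of the $s$-arcs and, for each $w_j$, the alternative contributions $c((w_j,t))$ or $|N^-_D(w_j)\cap(S_1\cup S_2)|$ depending on whether $w_j\in X$, then minimize these independent choices per $w_j$. Your remark that properties (1), (3), (5), (6) play no role in this lemma is correct and matches the paper, whose short proof never invokes them.
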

\begin{proof}
The term $\sum\limits_{i\in S_1\cup S_2}c((s,v_i))$ is the contribution to ${\rm cap}(C)$
of the arcs between $s$ and $\{ v_1,\ldots,v_k\}$.
Furthermore, if the vertex $w_j$ for some $j$ in $[m]$ belongs to $X$,
then its contribution to ${\rm cap}(C)$ is $c((w_j,t))$,
while, if $w_j$ does not belong to $X$,
then its contribution to ${\rm cap}(C)$ is $\big|N^-_D(w_j)\cap (S_1\cup S_2)\big|$.
Minimizing these independent contributions of the $w_j$ yields the stated expression.
\end{proof}
It is not difficult to make the expression 
$\big|N^-_D(w_j)\cap (S_1\cup S_2)\big|$
in the previous lemma slightly more explicit
exploiting the very regular structure of $D$.

\begin{theorem}\label{theorem4}
Let $a_1,\ldots,a_n,b_1,\ldots,b_m$, and $\nu$ be nonnegative integers 
with $\nu\leq \min\{ n,m\}$.

$((a_1,\ldots,a_n),(b_1,\ldots,b_m))$
is a bipartite degree sequence 
that has a realization with matching number $\nu$
if and only
there is some integer $k$ with $0\leq k\leq \nu$
for which the $(k+1)(n+1-k)$ inequalities of the form
\begin{eqnarray*}
\sum\limits_{i\in S_1\cup S_2}c((s,v_i))
+
\sum\limits_{j\in [m]}\min\Big\{c((w_j,t)),\big|N^-_D(w_j)\cap (S_1\cup S_2)\big|\Big\}
\geq \sum\limits_{i\in [n]}a_i-\nu,
\end{eqnarray*}
where 
\begin{itemize}
\item $D$ is the digraph of the network $N(d_A,d_B,\nu,k)$,
\item $S_1\subseteq \{ v_1,\ldots,v_k\}$ satisfies properties (1) and (5), and 
\item $S_2\subseteq \{ v_{k+1},\ldots,v_n\}$ satisfies properties (3) and (6),
\end{itemize}
are satisfied.
\end{theorem}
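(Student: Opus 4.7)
The plan is to chain together Theorem~\ref{theorem3}, Lemmas~\ref{lemma3} and~\ref{lemma4}, and the Max-Flow-Min-Cut Theorem. By Theorem~\ref{theorem3}, the sequence $((a_1,\ldots,a_n),(b_1,\ldots,b_m))$ has a realization with matching number~$\nu$ if and only if some $k\in\{0,\ldots,\nu\}$ exists for which $N(d_A,d_B,\nu,k)$ admits an $s$-$t$-flow of value $\sum_{i=1}^na_i-\nu$; by Max-Flow-Min-Cut, this is equivalent to asking that every $s$-$t$-cut of $N(d_A,d_B,\nu,k)$ have capacity at least $\sum_{i=1}^na_i-\nu$.

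Next I would invoke Lemma~\ref{lemma3} to restrict attention to clean cuts. Since some minimum cut is clean, the cut condition is equivalent to demanding capacity $\geq \sum a_i-\nu$ for every clean cut. For a fixed clean pair $(S_1,S_2)$, Lemma~\ref{lemma4} furnishes the minimum cut capacity over all possible $(T_1,T_2)$ in the exact form appearing on the left-hand side of the theorem statement. Moreover, inspecting the proof of Lemma~\ref{lemma3} shows that the massaging which enforces properties~(2) and~(4) alters only $T_1,T_2$ and leaves $S_1,S_2$ untouched; consequently this unrestricted minimum is attained at a clean $(T_1,T_2)$ whenever $(S_1,S_2)$ itself is clean. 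Therefore the existence of a realization with matching number~$\nu$ is equivalent, for some~$k$, to the stated inequality holding across all clean $(S_1,S_2)$.

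The last step is to verify the stated count. Property~(1) forces $S_1\subseteq\{v_1,\ldots,v_k\}$ to contain the vertices of largest $a$-value, while property~(5) breaks ties by taking larger indices first within each tie class; thus $S_1$ is uniquely determined by $|S_1|\in\{0,1,\ldots,k\}$, giving $k+1$ clean choices. The symmetric argument using properties~(3) and~(6) yields $n-k+1$ clean choices for $S_2\subseteq\{v_{k+1},\ldots,v_n\}$. Multiplying produces the $(k+1)(n+1-k)$ inequalities claimed. I expect the main (though minor) delicacy to be the explicit check that the unrestricted minimum in Lemma~\ref{lemma4} coincides with the minimum over clean $(T_1,T_2)$, so that restricting to clean $(S_1,S_2)$ is not too aggressive; once this observation is made, all other steps are direct applications of the already-established results.
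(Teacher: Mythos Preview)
Your proposal is correct and follows essentially the same approach as the paper: Theorem~\ref{theorem3} plus Max-Flow-Min-Cut, then Lemma~\ref{lemma3} to restrict to clean cuts, then Lemma~\ref{lemma4} to evaluate the minimum over $(T_1,T_2)$, and finally the count via uniqueness of clean $(S_1,S_2)$ given their cardinalities. The delicacy you flag is in fact avoidable: since the expression in Lemma~\ref{lemma4} is itself the capacity of some $s$-$t$-cut, the implication ``all cuts $\geq$ threshold $\Rightarrow$ all expressions $\geq$ threshold'' is immediate, and for the converse Lemma~\ref{lemma3} ensures that the clean $(S_1^*,S_2^*)$ of a clean minimum cut appears among your inequalities with value at most the minimum cut capacity---so no separate check that the unrestricted minimum is attained at clean $(T_1,T_2)$ is required.
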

\begin{proof}
By Theorem \ref{theorem3} and by the Max-Flow-Min-Cut Theorem \cite{fofu}, 
$((a_1,\ldots,a_n),(b_1,\ldots,b_m))$ 
is a bipartite degree sequence 
that has a realization with matching number $\nu$
if and only if 
all $s$-$t$-cuts in the network $N(d_A,d_B,\nu,k)$ 
for some $k$ 
have capacity at least $\sum\limits_{i\in [n]}a_i-\nu$.
By Lemma \ref{lemma3}, there is a clean minimum $s$-$t$-cut $C$ in $N(d_A,d_B,\nu,k)$ 
generated by a set $X$.
Considering all $(k+1)(n+1-k)$ possible values for the cardinalities 
of the sets $X\cap \{ v_1,\ldots,v_k\}$ and $X\cap \{ v_{k+1},\ldots,v_n\}$,
using the fact that 
sets $S_1$ and $S_2$ as in the statement are uniquely determined by their cardinalities,
and using Lemma \ref{lemma4}
allows to generate the inequalities of the stated form
that are all satisfied if and only if the minimum capacity of an $s$-$t$-cut in $N(d_A,d_B,\nu,k)$
has at least the desired value.
\end{proof}
Note that the number of possible choices for the triple 
$(k,|S_1|,|S_2|)$ is 
$O(\nu^2 n)$, which is as most $O(n^3)$.

Our final result is that the set of realizable matching numbers forms an interval.

If $G$ is a bipartite graph with partite sets $A$ and $B$,
then $G'$ arises from $G$ by a {\it bipartite swap}
if there are vertices $v$ and $v'$ in $A$, 
and $w$ and $w'$ in $B$ 
such that $vw$ and $v'w'$ are edges of $G$ 
but $vw'$ and $v'w$ are not,
and $G'$ arises from $G$ by removing the edges $vw$ and $v'w'$, 
and adding the edges $vw'$ and $v'w$.
Clearly, $G$ and $G'$ are realizations of the same bipartite degree sequence.

\begin{theorem}\label{theorem4}
If $(d_A,d_B)$ is a bipartite degree sequence 
that has realizations with matching numbers $\nu_{\min}$ and $\nu_{\max}$, 
and $\nu$ is an integer with $\nu_{\min}\leq \nu\leq \nu_{\max}$,
then $(d_A,d_B)$ has a realization with matching number $\nu$.
\end{theorem}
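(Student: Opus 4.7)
The plan is to derive the intermediate-value property from two ingredients: (a) a ``continuity'' statement saying that a single bipartite swap changes the matching number by at most one, and (b) a ``connectivity'' statement saying that any two realizations of the same bipartite degree sequence can be joined by a sequence of bipartite swaps. Together, these let one discretely interpolate between any two realizations while changing $\nu$ by at most one at each step, so all intermediate integer values of $\nu$ occur.

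For step (a), let $G'$ arise from $G$ by a bipartite swap removing $vw$ and $v'w'$ and adding $vw'$ and $v'w$. Fix a maximum matching $M$ in $G$ with $|M|=\nu(G)$. If $M$ contains neither $vw$ nor $v'w'$, then $M$ is a matching in $G'$. If $M$ contains both, then $(M\setminus\{vw,v'w'\})\cup\{vw',v'w\}$ is a matching of the same size in $G'$. If $M$ contains exactly one of the two edges, then $M$ minus that edge is still a matching in $G'$. In every case, $\nu(G')\geq \nu(G)-1$, and by the symmetric argument (the inverse of a swap is again a swap) $\nu(G)\geq \nu(G')-1$, so $|\nu(G)-\nu(G')|\leq 1$.

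For step (b), I would invoke the classical fact, going back to Ryser, that the set of bipartite realizations of a fixed bipartite degree sequence (on a fixed vertex set with fixed vertex-to-degree assignment) is connected under the bipartite swap operation. Up to a relabeling of vertices within each partite set -- which does not change the matching number -- we may therefore assume that the two given realizations $G_{\min}$ and $G_{\max}$ (with matching numbers $\nu_{\min}$ and $\nu_{\max}$) are connected by a sequence $G_{\min}=G_0,G_1,\ldots,G_r=G_{\max}$ of realizations in which each $G_{i+1}$ arises from $G_i$ by a single bipartite swap.

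To finish, given $\nu$ with $\nu_{\min}\leq\nu\leq\nu_{\max}$, consider the sequence of integers $\nu(G_0),\nu(G_1),\ldots,\nu(G_r)$. Its first term is $\nu_{\min}\leq\nu$, its last term is $\nu_{\max}\geq\nu$, and by step (a) consecutive terms differ by at most $1$. A discrete intermediate value argument then produces an index $i$ with $\nu(G_i)=\nu$, and $G_i$ is the desired realization. The main technical obstacle is really the invocation of swap-connectivity in step (b); since the paper's statement of bipartite swaps is made immediately before the theorem, it is natural to cite this classical result rather than reprove it, though if desired it can be established by an elementary induction on the symmetric difference $E(G)\triangle E(G')$, locating an alternating 4-cycle in this symmetric difference and using it to define a swap that strictly reduces the symmetric difference.
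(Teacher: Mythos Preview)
Your proof is correct and follows essentially the same approach as the paper: both arguments combine swap-connectivity of bipartite realizations (which the paper also cites as a folklore/Havel--Hakimi-type fact rather than proving) with the observation that a single bipartite swap changes the matching number by at most one, and then apply a discrete intermediate value argument. Your treatment is in fact slightly more careful than the paper's in explicitly handling the relabeling issue and stating the intermediate value step; the only quibble is that your parenthetical sketch of swap-connectivity via ``locating an alternating $4$-cycle in the symmetric difference'' is not quite right as stated (the symmetric difference may consist only of longer alternating cycles), but this aside is not part of your main argument.
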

\begin{proof}
Let $G_{\min}$ and $G_{\max}$ be realizations of $(d_A,d_B)$
with matching numbers $\nu_{\min}$ and $\nu_{\max}$, 
respectively.
It is a folklore fact that there is a sequence 
$G_0,\ldots,G_k$ of realizations of $(d_A,d_B)$
such that 
$G_0=G_{\min}$,
$G_k=G_{\max}$, and
$G_i$ arises from $G_{i-1}$ by a bipartite swap for every $i$ in $[k]$.
In fact, it follows from proofs of a bipartite version of the Havel-Hakimi Theorem \cite{ha,ha2}
using bipartite swaps that $G_{\min}$ and $G_{\max}$ can both be transformed 
to the same realization of $(d_A,d_B)$ using bipartite swaps,
and, hence, they can be transformed into each other.)

Now, let $i\in [k]$, and let $G_i$ arise from $G_{i-1}$
by removing the edges $vw$ and $v'w'$, 
and adding the edges $vw'$ and $v'w$.
Let $M$ be a matching in $G_{i-1}$.
If $vw,v'w'\not\in M$, then let $M'=M$,
if $vw\in M$ and $v'w'\not\in M$, then let $M'=M\setminus \{ vw\}$,
if $vw\not\in M$ and $v'w'\in M$, then let $M'=M\setminus \{ v'w'\}$, and
if $vw,v'w'\in M$, then let $M'=(M\setminus \{vw,v'w'\})\cup \{vw',v'w\}$.
By construction, $M'$ is a matching in $G'$, 
which implies $\nu(G_i)\geq \nu(G_{i-1})+1$.
By symmetry, we obtain $\nu(G_{i-1})\geq \nu(G_i)+1$,
that is, the matching numbers of consecutive graphs in the sequence $G_0,\ldots,G_k$
differ by at most one, which implies the existence of the desired realization.
\end{proof}
Theorem \ref{theorem4} implies that the set of all possible matching
numbers of realizations of a bipartite degree sequence $(d_A,d_B)$,
where $d_A$ has $n$ elements and $d_B$ has $m$ elements, 
can be determined in $O(n^4 m)$ time,
because $O(n^4)$ inequalities have to be checked,
each of which can be checked in $O(m)$ time. 
It seems an interesting problem to find a faster algorithm for this task.
For a degree sequence $d$,
one can study $\nu^{\cal B}_{\max}(d)$ and $\nu^{\cal B}_{\min}(d)$,
where ${\cal B}$ is the class of all bipartite graphs.
Note that the complexity of deciding the existence of a bipartite realization 
of a given degree sequence is unknown.


\end{document}